\documentclass[letterpaper,12pt,reqno,twoside]{amsart}
\usepackage[hmargin=3cm,vmargin=3cm]{geometry}

\usepackage{amssymb}
\usepackage{amsmath}
\usepackage[colorlinks,citecolor=blue,backref=page]{hyperref}
\usepackage[msc-links]{amsrefs} 

\newcommand{\be}{\begin{eqnarray}}
\newcommand{\ee}{\end{eqnarray}}

\newcommand{\beq}{\begin{equation}}
\newcommand{\eeq}{\end{equation}}
\newcommand{\beqn}{\begin{equation*}}
\newcommand{\eeqn}{\end{equation*}}

\newcommand{\defas}{\mathrel{\raise.095ex\hbox{$:$}\mkern-4.2mu=}}
\newcommand{\defasr}{\mathrel{=\mkern-4.2mu\raise.095ex\hbox{$:$}}}

\newcommand{\ie}{\textit{i.e.}}
\newcommand{\eg}{\textit{e.g.}}

\DeclareMathAlphabet{\mathfat}{U}{bbold}{m}{n}          

\DeclareMathOperator{\dist}{dist}
\DeclareMathOperator{\supp}{supp}

\newtheorem{thm}{Theorem}
\newtheorem{prop}[thm]{Proposition}

\newtheorem{lem}[thm]{Lemma}

\newtheorem{remark}[thm]{Remark}

\newcommand\cA{{\mathcal A}}
\newcommand\cB{{\mathcal B}}


\newcommand\bN{{\mathbb N}}

\newcommand\bR{{\mathbb R}}
\newcommand\bS{{\mathbb S}}

\newcommand\bZ{{\mathbb Z}}

\newcommand{\m}{{\mathrm m}}

\newcommand{\ve}{\varepsilon}

\begin{document}

\title[Positive Lyapunov exponent by a random perturbation]{Positive Lyapunov exponent by a random perturbation}

\author{Zeng Lian}
\author{Mikko Stenlund}

\address[Zeng Lian]{
Courant Institute of Mathematical Sciences\\
New York, NY 10012, USA.}
\email{lian@cims.nyu.edu}

\address[Mikko Stenlund]{
Courant Institute of Mathematical Sciences\\
New York, NY 10012, USA; Department of Mathematics and Statistics, P.O. Box 68, Fin-00014 University of Helsinki, Finland.}
\email{mikko@cims.nyu.edu}
\urladdr{http://www.math.helsinki.fi/mathphys/mikko.html}

\keywords{Lyapunov exponent, random perturbation}
\subjclass[2000]{37H15; 70K60}

\date{\today}

\begin{abstract}
We study the effect of a random perturbation on a one-parameter family of dynamical systems whose behavior in the absence of perturbation is ill understood. We provide conditions under which the perturbed system is ergodic and admits a positive Lyapunov exponent, with an explicit lower bound, for a large and controlled set of parameter values. 
\end{abstract}

\maketitle


\subsection*{Acknowledgements}
The authors are indebted to Lai-Sang Young for stimulating discussions. Mikko Stenlund has received funding from the Academy of Finland.


\section{Introduction}
\subsection{Background}
The question of the existence of positive Lyapunov exponents for a given dynamical system is one of extreme importance. It turns out to be very hard even in seemingly simple examples, which poses a great challenge to modern mathematics.

For example, consider the logistic family \footnote{Alternatively, as is done is some of the cited references, one could consider the real quadratic family $Q_a:x\mapsto a+x^2$, $a\in \bigl[-2,\frac14\bigr]$.}
\beqn
P_a:x\mapsto ax(1-x), \qquad a\in [1,4].
\eeqn
Let the set $\cA$ consist of those values of $a\in[1,4]$ for which $P_a$ admits a unique, finite, ergodic, absolutely continuous invariant measure, with a positive Lyapunov exponent. On the other hand, let $\cB$ denote the set of $a\in[1,4]$ for which $P_a$ has a periodic orbit attracting all orbits in $[0,1]$.
The set $\cA$ is known to have  positive measure~\cite{Jakobson1981} (also~\cite{BenedicksCarleson1985,Rychlik1988}) and the set $\cB$ is open and dense~\cite{GraczykSwiatek1997,Lyubich1997}. Using renormalization arguments, it has moreover been shown in~\cite{Lyubich2002} (see also~\cite{Lyubich1998}) that almost every value of~$a\in[1,4]$ falls in precisely one of the two sets $\cA$ and $\cB$. 

The above results on the abundance of parameter values admitting either a positive Lyapunov exponent or a periodic sink have been extended to multimodal situations in which several critical points are allowed; see~\cite{Tsujii1993,WangYoung2006} and~\cite{Kozlovski_Shen_vanStrien_2007}, respectively. However, it is not known whether the union of the two classes forms a set of full measure.

Taking these considerations into account, we understand why even such one-dimensional systems are notoriously hard to analyze: the nature of the dynamics depends very sensitively on the value of the parameter. Adding random noise to the model simplifies the picture due to averaging effects. For noisy systems, the dependence of a Lyapunov exponent on parameters is regular under mild conditions. Second,  it is a well-known dichotomy in the random case (see, \eg,~\cite{Young2008} for a discussion) that the sign of the Lyapunov exponent indicates in which dynamical category the system belongs to: a negative Lyapunov exponent implies convergence to a random sink consisting of finitely many points for almost all sample paths~\cite{LeJan1985, Baxendale1992}, while a positive one yields a random SRB measure almost surely~\cite{LedrappierYoung1988}.

Regarding random perturbations of dynamical systems, it is commonplace to start out with systems that are very well controlled in the absence of perturbation. One then goes on to show that control of the system is retained under sufficiently small random perturbations. A system possessing this property is called stochastically stable. For uniformly hyperbolic systems, standard references on stochastic stability include~\cite{Kifer1988} and~\cite{Young1986}. For one-dimensional maps admitting critical points, see~\cite{KatokKifer1986} and~\cite{BenedicksYoung1992}. The more recent~\cite{BenedicksViana2006} discusses a two-dimensional case. The preceding list, which of course could be continued much further, is meant to point the reader quickly to a handful of original references. 

While results of the above kind are very interesting, it would be much more satisfying if one could reverse the direction. That is, to perturb a dynamical system too hard to analyze by itself, to take advantage of the randomness in the noisy system, and then to infer properties of the zero-noise limit. The idea of doing so can be traced back at least to Pontryagin, Andronov, and Vitt \cite{PontryaginAndronovVitt1933}, and later to Kolmogorov~\cite{Sinai1989}. As the real world is inherently noisy, say, an invariant measure obtained in that limit could be interpreted as a physically observable (albeit idealized) statistical description of the system. Unfortunately, the zero-noise limit is not always well behaved. For instance, Lyapunov exponents may fail to be continuous at the point of vanishing perturbation; see the figure-eight attractor in~\cite{CowiesonYoung2005}. 

To take steps in the direction of the previous paragraph --- and more generally to develop new techniques for proving lower bounds on Lyapunov exponents --- we work with a one-parameter family of systems in which the dynamical properties of the unperturbed system for a given parameter value are unknown. Here, a sufficiently large perturbation is required (i) to regularize the parameter dependence of the nature of the dynamics so that (ii) a good lower bound on the Lyapunov exponent can be established for a large and controlled set of parameters.

\vspace{5mm}
The paper is organized as follows. In Section~\ref{subsec:prelim} we introduce our model and the necessary technical notions so that the results of the paper can be formulated in Section~\ref{subsec:results}. Theorem~\ref{thm:erg} concerns ergodicity of the system and is proved in Section~\ref{sec:erg}. Proposition~\ref{prop:sinks} identifies parameter values for which random sinks appear unless the perturbation is large enough. Its proof is given in Section~\ref{sec:Lyap}. Theorems~\ref{thm:large_smear} and~\ref{thm:crit_smear} give sufficient conditions for a positive Lyapunov exponent together with an explicit lower bound. They are also proved in Section~\ref{sec:Lyap}.

\subsection{Preliminaries}\label{subsec:prelim}
We denote by $\bS$ the circle obtained by identifying the endpoints of the unit interval $[0,1]$ and by $\m$ the uniform measure on $\bS$.  Let $$\tau_a:\bS\to \bS:x\mapsto a+ x + L\psi(x) \pmod 1,$$ where $a\in[0,1)$ and $L>0$ are constants, and $\psi:\bS\to\bR$ is a twice continuously differentiable map. Although $\tau_a$ depends on $L$, it is notationally convenient not to indicate this explicitly by a subscript. We assume that $\psi$ has $N>0$ critical points $c_1,\dots,c_N$ where $\psi'(c_i)=0$, each of which is nondegenerate, \ie, $\psi''(c_i)\neq 0$. Since nondegenerate critical points are isolated and the circle is compact, $N<\infty$. 

Notice that the maps $\tau_a$ are not unimodal with just one critical point, which is a case studied extensively in the literature. By contrast, the present paper involves a rather general class of multimodal maps for which the number of critical points is arbitrary. This setting arises, for example, in applications pertaining to shear-induced chaos in the theory of rank one attractors~\cite{WangYoung2002,WangYoung2003,WangYoung2006, LinYoung2010,OttStenlund2010}.

As discussed earlier, the parametric dependence of the dynamical nature of such maps can be very complicated. With the exception of some special parameter values, it is practically impossible to determine whether a particular choice of the parameter~$a$ results in chaotic or regular motion. To remedy the situation, we add a small amount of random noise to the system, which turns out to have a regularizing effect on the dependence of the dynamics on~$a$.

To be specific, we are interested in the ergodic and chaotic properties of the random circle map $\tau_a+Y$ determined by
$$
x\mapsto \tau_a(x)+Y \pmod1,
$$
where $Y$ is a random perturbation, or kick, distributed uniformly on $[-\ve,\ve]$ with some $\ve>0$. The reader may think of $\ve$ as the level of noise present in the system. For simplicity, we take  $Y(\omega)=\omega$ for each realization $\omega\in[-\ve,\ve]$. Given a realized sequence $(\omega_n)_{n=1}^\infty\in [-\ve,\ve]^{\bZ_+}$ of i.i.d.\ kicks, the trajectory $(x_n)_{n=0}^\infty$ of any initial point $x_0=x\in\bS$ is determined for each $n\geq 1$ by 
$$
x_{n} = \tau_a(x_{n-1})+\omega_n.
$$

Let us write 
$$
\tau_a^\omega(x)=\tau_a(x)+\omega \qquad \forall\,\omega\in\bR
$$
and denote the uniform probability measure on $[-\ve,\ve]$ by $\eta$. We say that a Borel probability measure $\mu$ on $\bS$ is \emph{invariant} for the above random map $\tau_a+Y$, if
\beqn
\mu(B) = \int_{[-\ve,\ve]} \mu((\tau_a^\omega)^{-1}B) \,d\eta(\omega) 
\eeqn
holds for every Borel set $B\subset \bS$. Further, an invariant measure $\mu$ is \emph{ergodic}, if the condition $\mu(B\bigtriangleup(\tau_a^\omega)^{-1}B)=0$ for $\eta$-a.e.\ $\omega\in[-\ve,\ve]$ on the set $B$ implies $\mu(B)\in\{0,1\}$. Here $A\bigtriangleup B$ denotes the symmetric difference $(A\setminus B) \cup (B\setminus A)$ of two sets. For example, it is easy to check that for complete smearing ($\ve=\frac12$) of the image $\tau_a(x)$, the unique invariant measure --- which is always ergodic --- is $\mu=\m$. In general, there can exist many invariant measures. However, we will see in Lemma~\ref{lem:unique} that there can be at most one measure which is both ergodic and equivalent to $\m$. For a sufficiently noisy system, such a measure turns out to exist and to rule out the existence of other invariant measures.

It follows from Birkhoff's ergodic theorem that the \emph{Lyapunov exponent} 
\beq\label{eq:Lyap_def}
\lambda_a\bigl((\omega_n)_{n=1}^\infty,x;L\big) = \lim_{n\to\infty}\frac1n\sum_{k=0}^{n-1} \log|\tau_a'(x_k)|
\eeq
exists ($\eta^{\bZ_+}\times\mu$)-a.s., if $\mu$ is invariant for the random map $\tau_a+Y$. If $\mu$ is also ergodic, then $\lambda_a\big((\omega_n)_{n=1}^\infty,x;L\big)$ is ($\eta^{\bZ_+}\times\mu$)-a.s.\ equal to the constant
\beq\label{eq:Lyap}
\lambda_a(L) = \int_\bS\log|\tau_a'|\,d\mu.
\eeq

Notice that the Lyapunov exponent measures the exponential rate of separation of initial points infinitesimally close to $x$, under the same sequence $(\omega_n)_{n=1}^\infty$ of kicks.  A positive Lyapunov exponent indicates sensitive dependence of the trajectory on the initial condition.

For any $K>1$, the map $\tau_a$ is uniformly expanding on the complement of the set
$$I_K=\{x\in \bS\,:\, |\tau_a'(x)|\leq K\}.$$ The non-expanding set $I_1$ is critical to the dynamics and its structure plays a central role in our analysis.

For any $K\geq 1$, the set $I_{K}$ is independent of $a$ and consists of those points $x\in\bS$ for which $-(K+1)/L\leq \psi'(x)\leq (K-1)/L$.  For a sufficiently large $L$,  they form $N$ disjoint intervals, each containing precisely one of the critical points $c_i$ of $\psi$, which does not depend on  $L$. (For smaller values of $L$, some of the intervals merge.) The endpoints are obtained by solving $\psi'(c_i+\xi_i)=\mp(K\pm 1)/L$ for $\xi_i$. Taylor expanding, we see that the length of such an interval is ${2K/|\psi''(c_i)| L} + O((K/L)^2)$. In particular, the length of the largest component of $I_K$ is
\beq\label{eq:b_K}
b_K = \frac{2K}{L}\frac{1}{\min_{1\leq i\leq N} |\psi''(c_i)|} + O\bigl((K/L)^2\bigr)\quad \text{as}\quad  L/K\to\infty,
\eeq
and
\beq\label{eq:I_K}
\m(I_K) = \frac{2K}{L}\sum_{1\leq i\leq N}\frac{1}{|\psi''(c_i)|} + O\bigl((K/L)^2\bigr)\quad \text{as}\quad  L/K\to\infty.
\eeq

Throughout this paper, 
$$B_r(A)=\{x\in\bS\,:\,\dist(x,A)\le r\}$$
is the closed $r$-neighborhood of a set $A\subset \bS$ and $B_r(x)=B_r(\{x\})$. In our estimates, $C$ stands for a generic constant whose numerical value may change from one expression to the next.

\subsection{Results}\label{subsec:results} 
We are now in position to state the results of this paper.
\begin{thm}\label{thm:erg}
Given any $L>0$, $a\in[0,1)$, and $\ve>\m(I_{N+1})/2$, the system admits a unique invariant measure. It is both ergodic and equivalent to $\m$. For any sufficiently large value of $L$, the same is true for any $a\in[0,1)$ and $\ve>b_2(L)/2$.
\end{thm}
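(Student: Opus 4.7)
The plan is to show that every invariant probability measure of the random map $\tau_a+Y$ is absolutely continuous with respect to $\m$, has a continuous strictly positive density, and is therefore unique and ergodic. Existence comes from Krylov--Bogolyubov applied to the Feller Markov operator $T\phi(x)=(2\ve)^{-1}\int_{-\ve}^{\ve}\phi(\tau_a(x)+\omega)\,d\omega$ on the compact circle $\bS$. The transition kernel has density $p(x,y)=(2\ve)^{-1}\mathbf{1}_{B_\ve(\tau_a(x))}(y)$ with respect to $\m$, so any invariant measure $\mu$ admits a density $g(y)=\int p(x,y)\,d\mu(x)\le (2\ve)^{-1}$. Writing the invariance equation as $g=K_\ve\ast(\cL_0 g)$, where $K_\ve$ is the uniform density on $[-\ve,\ve]$ and $\cL_0$ is the Perron--Frobenius operator of $\tau_a$, shows that $g$ is continuous.

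The key step is strict positivity of $g$, from which equivalence to $\m$ follows. Let $Z=\{g=0\}$ and $U=\{g>0\}$. Since $\cL_0 g\ge 0$ and $g(y)=(2\ve)^{-1}\int_{B_\ve(y)}(\cL_0 g)(z)\,d\m(z)$, any $y\in Z$ forces $\cL_0 g\equiv 0$ on $B_\ve(y)$ and hence $g\equiv 0$ on $\tau_a^{-1}(B_\ve(y))$ away from the finitely many critical points of $\tau_a$. Taking the union over $y\in Z$ yields, modulo a finite set,
\[
\tau_a(U)+B_\ve(0)\subset U,
\]
i.e., the one-step random image of $U$ stays inside $U$. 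The hypothesis on $\ve$ now forces $U=\bS$. Indeed, under $\ve>\m(I_{N+1})/2$ (resp.\ $\ve>b_2(L)/2$ for sufficiently large $L$), the interval $B_\ve(\tau_a(x))$ has length $2\ve>\m(I_{N+1})$ (resp.\ $2\ve>b_2$) and therefore cannot be contained in $I_{N+1}$ (resp.\ in any single connected component of $I_2$, the components being pairwise disjoint with largest length $b_2$). Hence $B_\ve(\tau_a(x))$ always intersects the expanding set $I_{N+1}^c$ (resp.\ $I_2^c$) in a subset of positive $\m$-measure on which $|\tau_a'|>N+1$ (resp.\ $>2$). Iterating, and exploiting that each step gains a definite amount of expansion from the transversal portion of the image together with the $\ve$-fattening, one shows inductively that the $\m$-measure of the $n$-fold random image of any nonempty open $V\subset U$ grows until it saturates at $1$, contradicting forward invariance unless $U=\bS$.

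With $g$ continuous and strictly positive, every invariant measure is equivalent to $\m$. Two distinct invariant measures would, by ergodic decomposition, produce two distinct ergodic invariant measures equivalent to $\m$, which Lemma~\ref{lem:unique} forbids; uniqueness and ergodicity follow. The main obstacle is the spreading argument in the key step: one must carefully handle the folding of $\tau_a$ at its critical points and quantify, across iterates, the genuine growth of image measure contributed by the portions of $B_\ve(\tau_a(\cdot))$ that fall in $I_{N+1}^c$ (resp.\ $I_2^c$), while controlling possible recombinations that could cause cancellation of the gain produced by one iterate in the next.
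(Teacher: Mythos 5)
Your overall architecture (existence via Krylov--Bogolyubov, the a priori bound $g\le(2\ve)^{-1}$, propagating positivity of the density through the relation $g=K_\ve\ast(\cL_0 g)$ to get forward invariance of $U=\{g>0\}$ under the random map, then uniqueness from Lemma~\ref{lem:unique}) is sound and runs parallel to the paper, which instead grows intervals $J_{i+1}=B_\ve(\tau_a(J_i))$ inside an invariant set $A$. But the step you yourself flag as ``the main obstacle'' is exactly the content of the theorem, and the justification you offer for it is not the mechanism by which the hypotheses on $\ve$ enter. Knowing that $B_\ve(\tau_a(x))$ has length $2\ve>\m(I_{N+1})$ and therefore meets the expanding region in positive measure does not give geometric growth of $\m$ of the iterated images: the $2\ve$-fattening contributes a \emph{fixed} amount per step, and the expansion acts only on the part of the set outside $I_K$, while folding at the critical points can collapse the image. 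What is actually needed, and what the paper proves, is the quantitative estimate
\begin{equation*}
\m\bigl(B_\ve(\tau_a(J))\bigr)\;\ge\;\min\Bigl\{1,\;2\ve+\tfrac{K}{N+1}\bigl(\m(J)-\m(I_K)\bigr)\Bigr\}\;\ge\;\min\Bigl\{1,\tfrac{K}{N+1}\,\m(J)\Bigr\},
\end{equation*}
valid for any interval $J$: here $J\cap(I_K)^c$ splits into at most $N+1$ monotonicity intervals (whence the divisor $N+1$ accounting for folds), and the loss $\tfrac{K}{N+1}\m(I_K)$ from discarding $J\cap I_K$ is compensated precisely by the gain $2\ve$ when $\ve\ge K\m(I_K)/\bigl(2(N+1)\bigr)$; taking $K=N+1$ recovers the threshold $\ve>\m(I_{N+1})/2$. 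Your heuristic never produces the expansion factor $>1$, so the induction ``until it saturates at $1$'' is unsupported as written.

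The second threshold $\ve>b_2(L)/2$ has a further missing ingredient. To replace $\m(I_K)$ by the length $b_K$ of a single component and the folding multiplicity $N+1$ by $2$, one must use that for large $L$ the map $\tau_a$ wraps each interval between consecutive critical points around $\bS$ at least twice, so that any interval $J$ with $B_\ve(\tau_a(J))\ne\bS$ contains at most one critical point and meets at most one component of $I_K$. Without this structural fact the estimate only yields the weaker requirement $\ve\ge K\m(I_K)/\bigl(2(N+1)\bigr)$, which does not reduce to $b_2/2$. Finally, a small technical point in your positivity argument: the growth lemma is an estimate for \emph{intervals}, whereas $U$ is merely open; you should run the iteration starting from a single component of $U$ (as the paper does with $J_0=B_\ve(\tau_a(x))$), so that all iterates remain intervals.
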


\begin{thm}\label{thm:large_smear}
Given constants $C>0$ and $\beta\in(0,1]$, a function $\ve=\ve(L)\ge C L^{\beta-1}$, and a sufficiently large $L$, the system admits a unique ergodic measure for any value of $a$. With respect to those measures,
$$\liminf_{L\to\infty}\inf_{a\in [0,1)} \frac{\lambda_a(L)}{\log L}\ge \beta.$$
\end{thm}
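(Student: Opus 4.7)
The plan is to combine the ergodic formula~\eqref{eq:Lyap} with Theorem~\ref{thm:erg} and an a priori density bound coming from the uniform distribution of the noise. Since $b_2(L)=O(L^{-1})$ by~\eqref{eq:b_K} and $\ve(L)\ge CL^{\beta-1}\gg L^{-1}$ whenever $\beta>0$, the hypothesis $\ve > b_2/2$ of Theorem~\ref{thm:erg} holds for all large $L$, producing, for every $a \in [0,1)$, a unique invariant measure $\mu$ which is ergodic and equivalent to $\m$. Writing $h = d\mu/d\m$, the invariance relation $\mu(B) = \int \eta(B-\tau_a(x))\,d\mu(x)$ together with the fact that $\eta$ has density $\le (2\ve)^{-1}$ immediately yields the pointwise bound $\|h\|_\infty \le 1/(2\ve)$.

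Now fix $K=K(L)\ge 1$ to be chosen and split
$$\lambda_a(L) = \int_{\bS\setminus I_K}\log|\tau_a'|\,d\mu + \int_{I_K\setminus I_1}\log|\tau_a'|\,d\mu + \int_{I_1}\log|\tau_a'|\,d\mu.$$
The middle term is nonnegative and is discarded; on $\bS\setminus I_K$ we have $\log|\tau_a'|\ge \log K$, so
$$\lambda_a(L) \ge (\log K)\bigl(1-\mu(I_K)\bigr) + \int_{I_1}\log|\tau_a'|\,d\mu.$$
From $h \le (2\ve)^{-1}$ combined with~\eqref{eq:I_K}, $\mu(I_K) \le \m(I_K)/(2\ve) \le CK/(L\ve)$. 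For the singular term, nondegeneracy makes $\psi'$ a $C^1$ diffeomorphism on a fixed neighborhood of each $c_i$; the corresponding component of $I_1$ equals $(\psi')^{-1}([-2/L,0])$, and the substitution $u=L\psi'(x)$ gives
$$\int_{I_1}\log|1+L\psi'(x)|\,d\m(x) = \frac{1}{L}\sum_{i=1}^N\frac{1}{|\psi''(c_i)|}\int_{-2}^{0}\log|1+u|\,du + O(L^{-2}) = -\frac{C_\psi}{L}\bigl(1+o(1)\bigr),$$
where $C_\psi = 2\sum_i|\psi''(c_i)|^{-1}$. Applying the density bound once more,
$$\int_{I_1}\log|\tau_a'|\,d\mu \ge -\frac{C_\psi}{2L\ve}\bigl(1+o(1)\bigr) \ge -C'\,L^{-\beta}.$$

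Finally, take $K = L^\beta/\log L$. Then $\log K/\log L = \beta - \log\log L/\log L \to \beta$, $\mu(I_K) \le C/\log L \to 0$, and the $I_1$-contribution divided by $\log L$ tends to zero, all uniformly in $a$ because $I_K$, $\m(I_K)$, and the constants above depend only on $\psi$ and $L$. Hence
$$\frac{\lambda_a(L)}{\log L} \ge \frac{\log K}{\log L}\bigl(1-\mu(I_K)\bigr) - \frac{C'}{L^\beta\log L} \longrightarrow \beta,$$
which is the desired $\liminf$ bound. The main obstacle is a robust lower bound for the singular contribution over $I_1$; it is unlocked by the uniform density bound $h\le(2\ve)^{-1}$ and the integrability of $\log|v|$ near $0$, after which the argument reduces to choosing $K$ so as to balance the main term $\log K$ against the tail $\mu(I_K)$.
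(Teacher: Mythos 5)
Your proof is correct and follows essentially the same route as the paper: the a priori density bound $h\le(2\ve)^{-1}$ from the uniform noise, a split of the Lyapunov integral at a threshold $K$ just below $L^{\beta}$, and a Lemma~\ref{lem:log_int}-type estimate $\int_{I_1}\log|\tau_a'|\,d\m\ge -CL^{-1}$ (which you re-derive by an equivalent change of variables). The only cosmetic difference is that your single choice $K=L^{\beta}/\log L$ unifies the paper's two cases ($K=L^{\hat\beta}$ with $\hat\beta\uparrow\beta$ for $\beta<1$, and $K=L/\log L$ for $\beta=1$).
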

\begin{remark}\label{rem:erg_implicit}
The assumption on the size of the perturbation $\ve$ guarantees ergodicity. Indeed, recalling~\eqref{eq:b_K}, the condition $\ve>b_2(L)/2$ of Theorem~\ref{thm:erg} is satisfied if $L$ is large.
\end{remark}
Let us pause to discuss the cases $\beta=1$ and $\beta=0$. Assuming that the measure $\mu$ is ergodic, with no explicit constraints on the parameters, Jensen's inequality implies, for all $L\geq 1$,
\beqn
\exp \lambda_a(L) = \exp\int_{\bS} \log|\tau_a'|\,d\mu \leq \int_{\bS} 1+L|\psi'|\,d\mu \leq CL,
\eeqn
or 
\beq\label{eq:upper_bound}
\limsup_{L\to\infty}\sup_{a\in [0,1)}\frac{\lambda_a(L)}{\log L} \leq 1.
\eeq
Hence, the lower bound of Theorem~\ref{thm:erg} for $\beta=1$ is optimal. 

When $\beta=0$, the theorem above suggests that all values of $a$ may not yield a positive limit for $\lambda_a(L)/\log L$. The reason is that the size of the non-expanding set $I_{1}$ scales like $L^{-1}$, as observed above. If the perturbation is not sufficiently large in comparison, the smeared image $B_\ve(\tau_a(z))$ of a critical point $z$ of $\tau_a$ may, for some values of the parameter~$a$, be contained in a small contracting neighborhood of~$z$. By this mechanism, negative Lyapunov exponents appear:  

\begin{prop}\label{prop:sinks}
There exists a constant $D>0$ and, for any large enough $L$ and for any $\ve\leq DL^{-1}$, an invariant measure $\mu$, such that
\beqn
\lambda_a\bigl((\omega_n)_{n=1}^\infty,x;L\big)<0
\eeqn
for ($\eta^{\bZ_+}\times\mu$)-a.e.\ $((\omega_n)_{n=1}^\infty,x)$.
\end{prop}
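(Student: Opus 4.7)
The plan is to choose the parameter $a$ so that $\tau_a$ has a super-stable fixed point $z\in\bS$, to build a small forward-invariant neighborhood of $z$ on which $|\tau_a'|<1$ uniformly, and to apply Krylov--Bogolyubov on that neighborhood.

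\emph{Choice of $z$ and $a$.} The critical points of $\tau_a$, being zeros of $1+L\psi'$, do not depend on $a$. Because $\psi''(c_i)\ne 0$ for each $i$, the implicit function theorem produces, for large $L$, a critical point $z=z(L)$ of $\tau_a$ within distance $O(L^{-1})$ of each $c_i$; fix any one such $z$, and observe that $\tau_a''(z)=L\psi''(z)$ is of order $L$. Now pick $a\equiv -L\psi(z)\pmod 1$, so that $\tau_a(z)=z$ on $\bS$.

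\emph{Trap and contraction.} Set $M=2\max_i |\psi''(c_i)|$, $r=2\ve$, and $B=B_r(z)$. Since $\psi''$ is continuous and $z\to c_i$ as $L\to\infty$, we may assume $|\psi''|\le M/2$ on $B$ for $L$ large. Taylor's formula, together with $\tau_a(z)=z$ and $\tau_a'(z)=0$, then gives
\beqn
|\tau_a(x)-z| \le \tfrac{1}{2}LM r^2 = 2LM\ve^2 \quad \text{and} \quad |\tau_a'(x)| \le LMr = 2LM\ve \qquad \forall\, x\in B.
\eeqn
Fix $D$ with $4MD\le 1$; then $\ve\le DL^{-1}$ forces $|\tau_a(x)-z|\le \ve/2$ and $|\tau_a'(x)|\le 1/2$ on $B$. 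The first inequality yields $|\tau_a^\omega(x)-z|\le 3\ve/2 < r$ for every $\omega\in[-\ve,\ve]$, so $\tau_a^\omega(B)\subset B$; the second gives $\log|\tau_a'|\le-\log 2$ uniformly on $B$.

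\emph{Existence of $\mu$ and sign of the exponent.} By forward invariance, the averaged transition operator $P^*\nu=\int(\tau_a^\omega)_*\nu\,d\eta(\omega)$ maps the weak-$*$ compact convex set of Borel probabilities supported on $B$ into itself, and is weakly continuous since $(\omega,x)\mapsto\tau_a^\omega(x)$ is jointly continuous. Krylov--Bogolyubov therefore delivers an invariant measure $\mu$ with $\supp\mu\subset B$. Because $(\log|\tau_a'|)^+$ vanishes on $B$, the integral $\int \log|\tau_a'|\,d\mu$ is well-defined and $\le-\log 2$. Birkhoff's ergodic theorem applied to the skew product $(\omega,x)\mapsto(\sigma\omega,\tau_a^{\omega_1}(x))$ on $[-\ve,\ve]^{\bZ_+}\times\bS$ with invariant measure $\eta^{\bZ_+}\times\mu$ shows that the limit in~\eqref{eq:Lyap_def} exists $(\eta^{\bZ_+}\times\mu)$-almost surely and coincides with the conditional expectation of $\log|\tau_a'|$ with respect to the invariant $\sigma$-algebra, hence is $\le -\log 2 < 0$.

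The only delicate point is the joint calibration of $D$ so that $B_{2\ve}(z)$ is simultaneously forward-invariant and uniformly contracting; the choice $D=1/(4M)$ achieves both, and the remainder is a standard Krylov--Bogolyubov plus Birkhoff argument.
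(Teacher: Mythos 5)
Your argument is correct and follows essentially the same route as the paper: tune $a$ so that a critical point $z$ of $\tau_a$ becomes a superstable fixed point, show that a small ball around $z$ is mapped into itself by every realization $\tau_a^\omega$, obtain an invariant measure supported there by a Krylov--Bogolyubov argument, and conclude $\lambda\le-\log 2$ from the uniform bound $|\tau_a'|\le\frac12$ on the trap. One cosmetic point: if the chosen $c_i$ happens to maximize $|\psi''(c_j)|$, continuity only gives $|\psi''|\le M/2+o(1)$ on $B$, so take $M$ to be, say, $2\sup_{\bS}|\psi''|$ as the paper effectively does; on the other hand, letting the trap radius scale as $2\ve$ handles the full range $\ve\le DL^{-1}$ more explicitly than the paper's choice of a single $\ve=\nu/3$.
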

We point out that, by general results~\cite{LeJan1985, Baxendale1992},  a negative Lyapunov exponent for an ergodic random system implies the existence of a random attracting set consisting of finitely many points. Such a set can in fact be  constructed following the proof of the above proposition.

To shed more light on the case $\beta=0$ in particular, we have the following result, which holds for a \emph{restricted set of values} of the parameter $a$.

\begin{thm}\label{thm:crit_smear}
There exist sets $A_L\subset[0,1)$, $L>0$, such that the following holds. Each $A_L$ consists of a union of at most $N^2$ intervals and  
$$\lim_{L\to\infty}\m(A_L)=1.$$
Given a function $\ve=\ve(L) > b_2(L)/2$
the system admits a unique ergodic measure for any sufficiently large value of $L$ and any value of $a$.
With respect to those measures, 
 $$
\liminf_{L\to\infty} \inf_{a\in A_L}\frac{\lambda_a(L)}{\log L}\ge \frac12.
$$
\end{thm}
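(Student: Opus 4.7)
The plan is to define $A_L$ by excluding parameters for which some critical orbit $\tau_a(c_i)$ lands within distance $\delta_L$ of a critical point $c_j$, and then to show that outside this exceptional set the invariant measure does not charge the non-expanding set $I_{K_0}$ heavily, for $K_0$ just below $L^{1/2}$.

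Let $\delta_L=L^{-1/2}$ (possibly adjusted by a logarithmic factor if needed) and set
\begin{equation*}
A_L=\{a\in[0,1) : \dist(\tau_a(c_i),c_j)\ge \delta_L \text{ for all } 1\le i,j\le N\}.
\end{equation*}
Because $a\mapsto \tau_a(c_i)=a+c_i+L\psi(c_i)\pmod 1$ is a rigid rotation of the circle, for each of the $N^2$ pairs $(i,j)$ the set $\{a:\dist(\tau_a(c_i),c_j)<\delta_L\}$ is an open interval of length $2\delta_L$. The complement of $A_L$ is thus a union of at most $N^2$ intervals, and so is $A_L$ itself, with $\m(A_L)\ge 1-2N^2\delta_L\to 1$. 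For $\ve>b_2(L)/2$ and $L$ sufficiently large, Remark~\ref{rem:erg_implicit} and Theorem~\ref{thm:erg} yield a unique ergodic invariant measure $\mu$; denote its density by $\rho=d\mu/d\m$.

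The central step is to prove that for $a\in A_L$ and $L$ large, $\rho$ is uniformly bounded (by a constant independent of $L$ and $a$) on a $(\delta_L/2)$-neighborhood of every critical point, and in particular on $I_{K_0}$ whenever $K_0\le L\delta_L/2$, which permits $K_0$ of order $L^{1/2}$. The argument exploits the invariance identity for the random transfer operator: $\rho(y)$ equals the $\eta$-average of the deterministic push-forward of $\rho$ by $\tau_a$ evaluated on $B_\ve(y)$, namely a sum of $\rho(x)/|\tau_a'(x)|$ over pre-images $x$ of points in $B_\ve(y)$. The condition $a\in A_L$ forces any pre-image $x$ that is close to some critical point $c_i$ to satisfy $|x-c_i|\gtrsim\sqrt{\delta_L/L}$---otherwise $\tau_a(c_i)$ would land within $\delta_L$ of $c_j$, contradicting the definition of $A_L$---so $|\tau_a'(x)|\gtrsim\sqrt{L\delta_L}$ on the relevant pre-images. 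This separates the fold-type singularity of $\rho$, which sits near the critical values $\tau_a(c_i)$ and is of order $1/\sqrt{L\ve}$, from the non-expanding set $I_{K_0}$; a bootstrap on the transfer operator then closes to give the uniform bound.

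With the density bound at hand, split
\begin{equation*}
\lambda_a(L)=\int_{\bS\setminus I_{K_0}}\log|\tau_a'|\,d\mu+\int_{I_{K_0}}\log|\tau_a'|\,d\mu.
\end{equation*}
The first integral is at least $\log K_0\cdot(1-\mu(I_{K_0}))\ge\log K_0\cdot(1-O(K_0/L))$. For the second, the Taylor estimate $|\tau_a'(x)|\asymp L|x-c_j|$ near each $c_j$ together with $\rho\le C$ on $I_{K_0}$ yields, via an explicit change of variable, $\bigl|\int_{I_{K_0}}\log|\tau_a'|\,d\mu\bigr|\le C(K_0\log K_0)/L$. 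Choosing $K_0$ slightly below $L^{1/2}$ makes both error terms $o(\log L)$, so $\lambda_a(L)\ge(\tfrac12-o(1))\log L$ uniformly in $a\in A_L$, which gives the claimed $\liminf\ge\tfrac12$. The principal obstacle is the uniform density bound of the previous paragraph: since $\rho$ is globally unbounded (with $1/\sqrt{L\ve}$ peaks at the critical values), the hypothesis $a\in A_L$ must be fully exploited through the transfer-operator bootstrap; once this is in place, the Lyapunov calculation is routine.
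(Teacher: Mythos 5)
Your architecture is the right one (exclude parameters for which critical values land near critical points, bound the invariant density on the non-expanding set, split the Lyapunov integral), and it matches the paper's in spirit. But the central step --- the claim that $\rho\le C$ uniformly on $I_{K_0}$ with $K_0$ of order $L^{1/2}$ for $a\in A_L$ --- is only asserted via an unspecified ``transfer-operator bootstrap,'' and the estimates you actually provide do not deliver it. With $\delta_L=L^{-1/2}$, your separation condition gives only $|\tau_a'(x)|\gtrsim\sqrt{L\delta_L}=L^{1/4}$ on the near-critical pre-image components of $B_\ve(x_0)$. Feeding this into the invariance identity with the only a priori input available, namely $\rho\le 1/(2\ve)$ (or the two-step bound \eqref{eq:bound2}), yields $\sup_{I_{K_0}}\rho\le CL/L^{1/4}=CL^{3/4}$, hence $\mu(I_{K_0})\le CK_0L^{-1/4}$; this forces $K_0=o(L^{1/4})$ and produces the exponent $\tfrac14$, not $\tfrac12$. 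To reach $\tfrac12$ you genuinely need the much stronger bound $\rho\le C$, and the bootstrap you gesture at does not close: the fold peak of $\rho$ near a critical value has height of order $\rho_{\max}/\sqrt{L\ve}$, so the contraction factor per iteration is $1/\sqrt{L\ve}$, which is \emph{not} small when $\ve$ is near its minimal admissible size $b_2(L)/2\sim C/L$. Moreover your geometric separation requires $\ve<\delta_L/2$, whereas the theorem permits any $\ve>b_2(L)/2$, including $\ve$ bounded away from zero; you never treat that regime (the paper does, by splitting into $\ve\le\ve_0(L)$ and $\ve>\ve_0(L)$ and invoking Theorem~\ref{thm:large_smear} in the second case).

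The paper resolves exactly this tension differently: it excludes a \emph{larger} parameter set, namely those $a$ for which $\tau_a(I_{K_1})$ meets $B_\epsilon(I_{K_2})$ with $K_1=(L/\log L)^{1/2}$, so the excluded set has measure of order $K_1^2/L\sim 1/\log L$ (much bigger than your $N^2L^{-1/2}$, but still vanishing). This buys the derivative bound $|\tau_a'|\ge K_1\sim L^{1/2}$ on every pre-image component $J_{2i-1}\subset\tau_a^{-1}B_\ve(x_0)$ for $x_0\in I_{K_2}$, whence $\m(J_{2i-1})\le 2\ve/K_1$ and, via the explicit two-step formula \eqref{eq:bound2}, only the weak bound $\sup_{I_{K_2}}\rho\le CL/K_1$. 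That suffices because $K_2=L^{1/2}/\log L$ is taken strictly smaller than $K_1$, so $\mu(I_{K_2})\le CK_2/K_1\to 0$ while $\log K_2\sim\tfrac12\log L$. If you want to salvage your version, the cleanest repair is to enlarge $\delta_L$ to roughly $K_1^2/L\sim 1/\log L$ so that the near-critical pre-images satisfy $|\tau_a'|\gtrsim\sqrt{L\delta_L}\sim L^{1/2}$, and then drop the uniform density claim in favor of the paper's weaker two-scale estimate.
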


The number $\frac 12$ appearing in Theorem~\ref{thm:crit_smear}  is a technical artifact of the proof. Namely, the set $A_L$ consists roughly speaking of parameter values for which the set $I_1$ does not intersect its image $B_\ve(\tau_a(I_1))$ under the random map $\tau_a+Y$. Clearly one could exclude fewer parameter values by considering cases in which the trajectory of a point is allowed to visit the set $I_{1}$ at several consequtive times. Moreover, the estimate could be improved for many parameter values by more elaborate techniques. In view of the fact that the value $1$ could not be exceeded due to the upper bound in \eqref{eq:upper_bound}, we have not pursued such an improvement.


\section{Ergodicity}\label{sec:erg}

We will next prove Theorem~\ref{thm:erg}. But first we need to recall some basic facts and definitions.

Notice that we can view the random circle map $\tau_a+Y$ above as the Markov chain generated by the
transition kernel
\beq\label{eq:kernel}
p(x,A) = \frac{1}{2\ve}\m(A\cap B_\ve(\tau_a(x))),
\eeq
for points $x\in\bS$ and Borel sets $A\subset\bS$. In other words, $p(x,A)$ is the probability that the random image of the initial point $x$ belongs to the set $A$. 
A Borel measure $\mu$ on $\bS$ is stationary if 
\beq\label{eq:stationary}
\mu(A)=\int_{\bS}p(x,A)\,d\mu(x).
\eeq
Stationary measures for the Markov chain are precisely the invariant measures for the random map.
Because $x\mapsto p(x,A)$ is a continuous function for any $A$, it is a standard fact~\cite{Kifer1986} that there exists a stationary measure $\mu$. For any $\ve>0$,
\beq\label{eq:abs_cont}
\mu(A) \leq \max_x p(x,A) \leq \frac{1}{2\ve} \m(A)
\eeq
follows immediately from \eqref{eq:stationary}. In other words, $\mu$ is absolutely continuous with respect to the measure $\m$, written $\mu\ll \m$, and therefore has a density~$\rho$:
\beqn
\mu(A) = \int_A \rho\,d\m.
\eeqn

Define $P^*:L^1(\bS)\to L^1(\bS)$ by
$$(P^*f)(x)=\int_{\bS}f(y)\,p(x,dy),\quad f\in L^1(\bS).$$
Thus, given an initial state $x\in\bS$ of the system, the expected value of a function $f\in L^1(\bS)$ after one time step is $(P^*f)(x)$. 
We say that a Borel set $A$ is invariant modulo $\mu$ if $(P^*1_A)(x)=1_A(x)$  for $\mu$-a.e.~$x$. Notice that in this case $p(x,A)=1$  for $\mu$-a.e.~$x\in A$. Finally, the measure $\mu$ is \emph{ergodic}, if all invariant sets are trivial, \ie, $\mu(A)\in\{0,1\}$ whenever $A$ is invariant modulo $\mu$. Again, this definition of ergodicity coincides with the one given earlier for the random map.

\begin{lem}\label{lem:unique}
Let $\ve>0$ and consider the Markov chain generated by the transition kernel $p$. If there exists an ergodic stationary measure which is equivalent to $\m$, there are no other stationary measures.
\end{lem}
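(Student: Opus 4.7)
The plan is to combine the absolute continuity of stationary measures built into \eqref{eq:abs_cont} with the ergodic decomposition and the mutual singularity of distinct ergodic stationary measures. Write $\nu$ for the given ergodic stationary measure equivalent to $\m$, and let $\mu$ be any stationary probability measure; the goal is to prove $\mu=\nu$. Since \eqref{eq:abs_cont} forces $\mu\ll\m$ and by hypothesis $\nu\sim\m$, one immediately has $\mu\ll\nu$.

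Next I would invoke the ergodic decomposition theorem to write $\mu=\int \mu_\omega\,dQ(\omega)$ with each $\mu_\omega$ an ergodic stationary probability measure. This is available because $\bS$ is a compact metric space and the kernel $p$ is Feller: indeed $(P^*f)(x)=\frac{1}{2\ve}\int_{\tau_a(x)-\ve}^{\tau_a(x)+\ve} f(y)\,d\m(y)$ depends continuously on $x$ whenever $f$ is continuous. Each component $\mu_\omega$ is itself stationary, so \eqref{eq:abs_cont} applies again and yields $\mu_\omega\ll\m\sim\nu$.

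The crucial step is to verify that any two distinct ergodic stationary measures $\mu_1,\mu_2$ are mutually singular. I would pass to the skew product $T(x,\omega)=(\tau_a^{\omega_1}(x),\sigma\omega)$ on $\bS\times[-\ve,\ve]^{\bZ_+}$ equipped with the invariant measure $\mu_i\times\eta^{\bZ_+}$; it is standard that $\mu_i$ is an ergodic stationary measure for the random map if and only if $T$ is ergodic for this invariant measure. Birkhoff's theorem applied to $T$ then gives that for any continuous $f\colon\bS\to\bR$ the time average $\frac{1}{n}\sum_{k=0}^{n-1}f(x_k)$ converges to $\int f\,d\mu_i$ for $\mu_i$-a.e.\ $x$ and $\eta^{\bZ_+}$-a.e.\ noise sequence. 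Picking $f$ with $\int f\,d\mu_1\neq\int f\,d\mu_2$ forces the sets of generic points of $\mu_1$ and $\mu_2$ in $\bS$ to be disjoint, whence $\mu_1\perp\mu_2$. Applied to the ergodic decomposition, each $\mu_\omega$ is ergodic stationary and $\ll\nu$, so mutual singularity rules out $\mu_\omega\neq\nu$; hence $\mu_\omega=\nu$ for $Q$-a.e.\ $\omega$, and integrating yields $\mu=\nu$.

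The main obstacle is the careful bookkeeping of standard random-dynamical-systems machinery --- existence of the ergodic decomposition for Markov chains on Polish spaces and the identification of stationary-measure ergodicity with ergodicity of the associated skew product. Once these are in place, the essential content of the lemma is simply the upgrade from the automatic $\mu\ll\m$ to $\mu\ll\nu$ via the hypothesis $\nu\sim\m$, which is then incompatible with mutual singularity.
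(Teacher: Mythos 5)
Your proposal is correct and follows essentially the same route as the paper: both arguments rest on (a) the automatic bound \eqref{eq:abs_cont} forcing every stationary measure to be absolutely continuous with respect to $\m$, (b) mutual singularity of distinct ergodic stationary measures obtained from Birkhoff's theorem applied to an associated deterministic system, and (c) the ergodic decomposition to pass from ergodic to general stationary measures. The only cosmetic difference is that you realize the Markov chain as a skew product over the noise space while the paper works with the shift on the path space $\bS^{\bN}$ and the decomposition $P^\mu=\int_\bS P^x\,d\mu(x)$; these are interchangeable devices.
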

\begin{proof} 
Let $\mu_1$ and $\mu_2$ be ergodic stationary measures for the Markov chain on the state space $\bS$. Recall that, given an initial measure $\mu$, the chain generates a probability measure $P^\mu$ on the space of trajectories, $\bS^\bN$, and
\beq\label{eq:path_decomp}
P^\mu = \int_\bS P^{x}\,d\mu(x),
\eeq
where $P^x$ is the measure corresponding to an initial point mass at $x$. In the sense of measure preserving transformations, $P^{\mu_1}$ and $P^{\mu_2}$ are ergodic with respect to the left shift on $\bS^\bN$. It follows from Birkhoff's ergodic theorem that either $P^{\mu_1}=P^{\mu_2}$ or the measures are mutually singular, written $P^{\mu_1} \perp P^{\mu_2}$. In the first case, $\mu_1=\mu_2$, as can be seen by considering sets of the form $A\times\bS^{\bZ_+}\subset \bS^\bN$ with $A\subset \bS$ a Borel set. In the second case, there exists a Borel set $\cA\subset\bS^\bN$ such that $P^{\mu_1}(\cA)=1$ and $P^{\mu_2}(\cA)=0$. Therefore \eqref{eq:path_decomp} implies $P^x(\cA)=1$  for $\mu_1$-a.e.~$x$ and $P^x(\cA)=0$ for $\mu_2$-a.e.~$x$, meaning that $\mu_1 \perp \mu_2$. 

In conclusion, two distinct ergodic measures are mutually singular. Assuming now that there exists an ergodic measure $\mu$ which is equivalent to $\m$, it must be the only ergodic measure, because by \eqref{eq:abs_cont} any other candidate would also have a density with respect to $\m$. Since any stationary measure is a convex combination of ergodic ones, $\mu$~must in fact be the only stationary measure for the Markov chain. 
\end{proof}

\begin{proof}[Proof of Theorem~\ref{thm:erg}]
Uniqueness of the measure $\mu$ with the claimed properties is guaranteed by Lemma~\ref{lem:unique}. Thus, we are left with proving existence. 

First, we claim that for $\m$-a.e.\ $x\in \supp \mu$, it holds true that $p(x,\supp \mu)=1$. Since $d\mu=\rho\,d\m$ is stationary
\begin{align*}
 \mu(\supp \mu)=\int_\bS p(x,\supp \mu)\,d\mu(x)
=\int_{\supp \mu}p(x,\supp \mu)\rho(x) \,d\m(x).
\end{align*}
The claim follows from $\mu(\supp \mu)=\int_{\supp \mu}\rho(x)\,d\m(x)$. Moreover, since $\mu\ll\m$, we also have $\m(\supp \mu)>0$. 

Second, we show that any Borel set $A$ invariant modulo $\mu$ with $\mu(A)>0$ has $\m(A)=1$. By  $\mu\ll\m$, this also implies $\mu(A)=1$. Since $\supp\mu$ is invariant modulo $\mu$, we assume without loss of generality that $A\subset\supp\mu$. The idea of the proof is to construct a sequence of intervals $J_0,J_1,\dots$ with $J_{i+1}=B_\ve(\tau_a (J_i))$ such that {\bf (i)}~$J_i\subset A \mod\m$, meaning $\m(J_i\setminus A)=0$ and {\bf (ii)}~$J_i=\bS$ for all sufficiently large values of~$i$, provided $\ve$ is sufficiently large. As a byproduct, we will have obtained $\m(\supp\mu)=1$, which shows that $\m$ and $\mu$ are equivalent measures.

Proof of {\bf (i)}.
Note that restricted to $\supp \mu\supset A$, the statements ``$\mu$-a.e.'' and ``$\m$-a.e.'' are equivalent, so we will simply write ``a.e.'' in such a situation.
We fix an arbitrary parameter value $a\in[0,1)$ for the map $\tau_a$. For a.e.\ $x\in A$, 
$p(x,A)=1.$ We pick such an $x$. Then, by \eqref{eq:kernel}, the interval $J_0=B_\ve(\tau_a(x))$ satisfies $J_0\subset A\mod \m$. Observe that $\mu(J_0\cap A)>0$ because $\m(J_0\cap A)=2\ve$ and $A\subset \supp \mu$.  By invariance of $A$, $p(y,A)=1$
a.e.\ $y\in J_0\cap A$. Denote the set of such
$y$ by $\tilde J_0$. Then $J_0=\tilde J_0\cup N_0$ for some $\m$-null set $N_0$.

We define $J_{i}=B_\ve(\tau_a(J_{i-1}))$
 and $\tilde J_{i}=\{y\in J_i\cap A\,:\, p(y,A)=1\}\subset A$ for $i\geq 1$ inductively. We claim that
$J_i=\tilde J_i \cup N_i$ for some $\m$-null set $N_i$. The proof is inductive. First of all, denoting by $\partial J_i$ the boundary of $J_i$ (consisting of no more than two points), we have
\beqn\label{eq:J_i}
J_i = \bigcup_{y\in \hat J_{i-1}} \!\! B_\ve(\tau_a(y)) \cup\partial J_i,
\eeqn
where $\hat J_{i-1}$ is a countable dense subset of $\tilde J_{i-1}$. This is so, because $J_i$ and $J_{i-1}=\tilde J_{i-1}\cup N_{i-1}$ are closed intervals and $\m(N_{i-1})=0$. Second, for each $y\in\tilde J_{i-1}$
 we have $p(y,A)=1$, which implies 
$B_\ve(\tau_a(y))\subset A\mod\m$. Since $\hat J_{i-1}$ is countable, we conclude $J_i\subset A\mod \m$. Hence also $\tilde J_i=J_i\mod\m$ by invariance of $A$. 

Proof of {\bf (ii)}. Recall that $N$ is the number of critical points of the map $\psi$ and that $I_K = \{x\in\bS\,:\,|\tau_a'(x)|\leq K\}$. Suppose first that $L>0$ is arbitrary. For any $K>N+1$, any $\ve\geq K\m(I_K)/2(N+1)$, and any interval $J\subset \bS$, we have that
\begin{align*}
\m(B_{\ve}(\tau_a(J))) & \geq \min\{1,2\ve+\m(\tau_a(J\cap (I_{K})^c))\}
\\
&
\ge\min\left\{1,2\ve+\frac{K}{N+1}\m(J\cap (I_{K})^c)\right\}
\\
&\ge\min\left\{1,2\ve+\frac{K}{N+1}(\m(J)-\m(I_{K}))\right\}
\\
&\ge\min\left\{1,\frac{K}{N+1} \m(J)\right\}.
\end{align*}
The maximum number $N$ of critical points enters the argument, because although the map $\tau_a$ is locally expanding on $(I_K)^c$, the graph of $\tau_a$ has a fold at each critical point.  The above estimate shows that the interval $J_i$ grows exponentially with $i$ until it covers $\bS$.

Now, assume instead that $L>0$ is so large that $\tau_a$ wraps each of the intervals $(z_i,z_{i+1})$ around $\bS$ twice, where $z_i$ are the critical points of $\tau_a$ labeled clockwise, and that each $z_i$ is nondegenerate. Let the interval $J\subset \bS$ be such that $B_\ve(\tau_a(J))\neq\bS$ --- otherwise we are done. Then $J$ contains at most one of the critical points $z_i$ and intersects at most one component of $I_K$. Recall that $b_{K}$ denotes the length of the largest component of $I_K$. For any $K>2$ and any $\ve\geq K b_K/4$,
\begin{align*}
\m(B_{\ve}(\tau_a(J))) & \geq 2\ve+\m(\tau_a(J\cap (I_{K})^c))
\\
&
\ge 2\ve+\frac{K}{2}\m(J\cap (I_{K})^c)
\\
&\ge 2\ve+\frac{K}{2}(\m(J)-b_{K})
\\
&\ge \frac{K}{2} \m(J).
\end{align*}
Again, we are able to conclude that the interval $J_i$ grows exponentially with $i$ until it covers $\bS$.

As a final remark,  the value of $K$ was arbitrary. We see that, for arbitrary $L$ and for large $L$,  it is enough to assume $\ve>\m(I_{N+1})/2$ and $\ve>b_2/2$, respectively, for the theorem to hold. \end{proof}


\section{Lyapunov exponent}\label{sec:Lyap}
In this section we prove our main results, Theorems~\ref{thm:large_smear} and~\ref{thm:crit_smear}.  Before that, we present a short proof of Proposition~\ref{prop:sinks} on the existence of negative Lyapunov exponents for moderate size perturbations.
\begin{proof}[Proof of Proposition~\ref{prop:sinks}] 
Fix $i\in\{1,\dots,N\}$. If $L$ is sufficiently large, the map $\tau_a$ has a critical point $z\in\bS$ close to $c_i$. Now, tune $a_z\in \bS$ so that $\tau_{a_z}(z)=z$. Taylor expanding at $z$,
\beqn
\tau_a^\omega(x) = \omega+a-a_z + \tau_{a_z}(x) = \omega + a - a_z + z + L\int_z^x(x-t)\psi''(t)\,dt.
\eeqn
We see that 
$$
\tau_a^\omega(B_\nu(z))\subset B_\nu(z)
$$
for any $\omega\in[-\ve,\ve]$, provided that
$$
\ve + |a-a_z| + \frac L 2 \sup|\psi''|\,\nu^2 \leq \nu.
$$
Moreover,
\beq
\label{eq:difference}
|\tau_a(x)-\tau_a(y)| \leq ML\nu\, |x-y| \qquad \forall\,x,y\in B_\nu(z),
\eeq
where the constant $M$ only depends on $\psi$. Let us now choose $\nu=\frac 1{2ML}$ and $\ve= \frac \nu3$.  For any $a$ with $|a-a_z|\leq \frac \nu3$ and a large enough $L$, any realization of the random map $\tau_a+Y$ maps the interval $B_\nu(z)$ inside itself. By the same argument as in the beginning of Section~\ref{sec:erg}, the map $\tau_a|_{B_\nu(z)}+Y$ has an invariant measure $\mu$. This is an invariant measure for $\tau_a+Y$ supported on a subset of $B_\nu(z)$. Since \eqref{eq:difference} implies that $|\tau_a'|\leq \frac12$ on $B_\nu(z)$, we obtain directly from \eqref{eq:Lyap_def} the bound
\beqn
\lambda_a\bigl((\omega_n)_{n=1}^\infty,x;L\big)\leq -\log 2
\eeqn
for ($\eta^{\bZ_+}\times\mu$)-a.e.\ $((\omega_n)_{n=1}^\infty,x)$.
\end{proof}

To estimate the Lyapunov exponent from below, we first need to bound the invariant density $\rho$ from above.

Notice that $p(x,\cdot)$ in \eqref{eq:kernel} is a Borel probability measure and that it has the representation
\beqn
p(x,A) = \int_A \phi(x,y)\,d\m(y),
\eeqn
where the density $\phi(x,\cdot)$ is the Radon--Nikodym derivative
\beqn
\phi(x,y) = \frac{d p(x,\cdot)}{d\m}\bigg|_y = \frac{1}{2\ve} 1_{B_\ve(\tau_a(x))}(y).
\eeqn
Iterating \eqref{eq:stationary} once,
\beq\label{eq:kernel2}
\mu(A) = \int_\bS p(x,A)\,d\mu(x) = \iint_{\bS\times\bS} p(x,dy)\,p(y,A)\,d\mu(x) .
\eeq
Recall that $\mu$ is absolutely continuous with density $\rho$. Thus,
\beqn
\lim_{\delta\to 0+}\frac{\mu(B_\delta(x_0))}{\m(B_\delta(x_0))} = \rho(x_0).
\eeqn
Applying the bounded convergence theorem to \eqref{eq:kernel2} with $A=B_\delta(x_0)$,
\beqn
\begin{split}
\rho(x_0) &= \int_\bS  \phi(x,x_0)\, d\mu(x) = \iint_{\bS\times\bS}  \phi(x,y)\phi(y,x_0) \,d\m(y)\,d\mu(x).
\end{split}
\eeqn
The first equality immediately yields the bound
\beq\label{eq:bound}
\rho(x_0) \leq \frac{1}{2\ve},
\eeq
whereas the second one shows that
\beqn
\rho(x_0) \leq \max_x\int_\bS \phi(x,y)\phi(y,x_0) \,d\m(y).
\eeqn
Here
\beqn
\begin{split}
\phi(x,y)\phi(y,x_0) &= \frac{1}{4\ve^2} 1_{B_\ve(\tau_a(x))}(y)\cdot1_{B_\ve(\tau_a(y))}(x_0)
\end{split}
\eeqn
so that 
\beq\label{eq:bound2}
\rho(x_0) \leq  \frac{1}{4\ve^2} \max_z \m\bigl(B_\ve(z)\, \cap \,\tau_a^{-1}B_\ve(x_0)\bigr).
\eeq

It turns out that we also need the following estimate.
\begin{lem}\label{lem:log_int}
There exists a constant $C>0$ such that, for sufficiently large values of $L>0$,
\beqn
\int_{I_1}\log|\tau_a'|\,d\m \geq - CL^{-1} .
\eeqn
\end{lem}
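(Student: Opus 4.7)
The plan is to exploit the explicit formula $\tau_a'(x)=1+L\psi'(x)$ together with the nondegeneracy of the critical points to reduce the integral to a product of an order-$L^{-1}$ Jacobian and the finite constant $\int_{-1}^{1}\log|u|\,du=-2$.

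First I would describe the structure of $I_1$ for large $L$: by the discussion leading to \eqref{eq:b_K}, $I_1$ is a disjoint union of $N$ intervals $J_1,\dots,J_N$, with $J_i$ containing only the critical point $c_i$ of $\psi$. Since $\psi''(c_i)\neq 0$ and $J_i$ shrinks to $\{c_i\}$ as $L\to\infty$, $\psi''$ is uniformly bounded away from zero and of constant sign on $J_i$ for all large $L$. Therefore $\tau_a''=L\psi''$ is nonvanishing on $J_i$, so $\tau_a'$ is strictly monotone there; by the very definition of $I_1$ it maps $J_i$ bijectively onto $[-1,1]$.

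Next I would apply the substitution $u=\tau_a'(x)=1+L\psi'(x)$ on each $J_i$, for which $du=L\psi''(x)\,dx$. Uniform continuity of $\psi''$ together with $|x(u)-c_i|=O(L^{-1})$ yields
\[
\frac{1}{|L\psi''(x(u))|}=\frac{1}{L|\psi''(c_i)|}\bigl(1+O(L^{-1})\bigr)
\]
uniformly in $u\in[-1,1]$. Since $\log|u|$ is absolutely integrable near $0$,
\[
\int_{J_i}\log|\tau_a'|\,d\m=\frac{1+O(L^{-1})}{L|\psi''(c_i)|}\int_{-1}^{1}\log|u|\,du=-\frac{2}{L|\psi''(c_i)|}+O(L^{-2}).
\]
Summing over $i=1,\dots,N$ yields $\int_{I_1}\log|\tau_a'|\,d\m\ge -CL^{-1}$ for a suitable $C>0$, and this bound is automatic in $a$ because neither $I_1$ nor $\tau_a'$ depends on $a$.

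The only subtle point — not really an obstacle — is the integrability of $\log|u|$ at the origin, which justifies the change of variables despite $|\tau_a'|$ vanishing at an interior point of each $J_i$. Verifying that $\tau_a'$ is monotone on each $J_i$, which requires $L$ sufficiently large so that $\psi''$ does not change sign on any component, is the only place where the largeness hypothesis on $L$ is used.
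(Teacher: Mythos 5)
Your argument is essentially the paper's: both reduce the integral over each component of $I_1=\{-2/L\le\psi'\le 0\}$ to $\int\log|u|\,du$ via the substitution $u=\tau_a'$ (the paper uses $t=|\tau_a'|$ on the two monotone halves of each component, which is the same computation), with the Jacobian controlled by $\inf_{I_1}|\psi''|>0$. One cosmetic caveat: since $\psi$ is only assumed $C^2$, the relative error in $\psi''(x(u))$ versus $\psi''(c_i)$ is $1+o(1)$ rather than $1+O(L^{-1})$, but this is irrelevant for the one-sided bound the lemma asserts.
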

\begin{proof}
Recall that the critical points of $\psi$ are nondegenerate and observe that the set $I_1$ is precisely $\{x\in\bS\,:\,-2/L\leq \psi'(x)\leq 0\}$. Thus, for large $L$, $I_1$ consists of $N$ disjoint intervals, none of which contains any zeros of $\psi''$. Thus, $\inf_{I_1}|\psi''| > 0$. Moreover, $I_1$ is the union of $2N$ intervals $I_1^{(1)},\dots,I_1^{(2N)}$ on the interior of each of which $|\tau_a'|$ is one-to-one and onto $(0,1)$. Therefore, by the change of variables $t=|\tau_a'|$ and the fact that $\int_0^1\log t = -1$,
\begin{align*}
\int_{I_1}\log
|\tau_a'|\,d\m
& = \sum_{i=1}^{2N} \int_{I_1^{(i)}}\log
|\tau_a'|\,d\m \geq \sum_{i=1}^{2N} \frac{1}{\inf_{I_1^{i}}|\tau_a''|}\int_0^1\log t\,dt 
\\
& \ge -\frac{2N}{\inf_{I_1}|\tau_a''|}
 \ge -\frac{2N}{L \inf_{I_1}|\psi''|}.
\end{align*}
Since $\psi$ is independent of any parameters, the proof is complete.
\end{proof}

\begin{proof}[Proof of Theorem~\ref{thm:large_smear}]
We first consider the case $\beta\in(0,1)$, \ie, $\beta\neq 1$. 
Since $\ve \ge C L^{\beta-1}$, the bound \eqref{eq:bound} on the density $\rho$ of $\mu$, together with~\eqref{eq:I_K}, yields
\beqn
\rho\leq CL^{1-\beta}
\quad\text{and}\quad
\mu\bigl(I_{L^{\hat\beta}}\bigr)\le \sup\rho\cdot \m\bigl(I_{L^{\hat\beta}}\bigr) \le CL^{\hat\beta-\beta}
\eeqn
uniformly for $\hat\beta\in(0,\beta)$. 
The conditions of Theorem~\ref{thm:erg} are satisfied for large enough~$L$. We can therefore use the formula in~\eqref{eq:Lyap} for ergodic measures $\mu$ to bound the Lyapunov exponent $\lambda_a(L)$:
\begin{align*}
 \lambda_a(L)
&\ge \int_{(I_{L^{\hat\beta}})^c}\log |\tau_a'|\,d\mu+\int_{I_1}\log
|\tau_a'|\,d\mu\\
&\ge \Bigl(1-\mu\bigl(I_{L^{\hat\beta}}\bigr)\Bigr)\log L^{\hat\beta}+\sup_{I_1}\rho\cdot\int_{I_1}\log
|\tau_a'|\,d\m\\
&\ge (1-CL^{\hat\beta-\beta})\log L^{\hat\beta}-
C L^{1-\beta}\cdot CL^{-1}
\\
& \ge \Bigl(\bigl(1-o(1)\bigr)\hat\beta-o(1)\Bigr)\log L.
\end{align*}
Above, Lemma~\ref{lem:log_int} was used to bound the last integral.
Since $\hat \beta$ can be chosen arbitrarily close to $\beta$, 
the proof is complete for $\beta\neq 1$.

In order to analyze the case $\beta=1$, we replace $I_{L^{\hat\beta}}$ by $I_{h(L)}$, where $h(L)=L/\log L$. Notice that $\ve \ge C$ results in
\beqn
\rho\leq C\quad\text{and}\quad\mu(I_{h(L)})\leq C/\log L
\eeqn
by the same arguments as above. Therefore,
\begin{align*}
 \lambda_a(L)
&\ge \int_{(I_{h(L)})^c}\log |\tau_a'|\,d\mu+\int_{I_1}\log
|\tau_a'|\,d\mu\\
&\ge \Bigl(1-\mu\bigl(I_{h(L)}\bigr)\Bigr)\log (L/\log L)+\sup_{I_1}\rho\cdot\int_{I_1}\log
|\tau_a'|\,d\m\\
&\ge (1-C/\log L)(\log L-\log\log L)-
 CL^{-1}
\\
& \ge \bigl(1-o(1)\bigr)\log L,
\end{align*}
which proves the theorem also for $\beta=1$.
\end{proof}


\begin{proof}[Proof of Theorem~\ref{thm:crit_smear}]
Below, we will specify a set $A_L$, taking $a$ from which a lower bound on $\lambda_a(L)$ can be deduced.
 
It will be helpful to keep in mind that, for large enough $L$, there are precisely $N$ critical points of $\tau_a$, all nondegenerate, which are $O(L^{-1})$ units apart from the critical points $c_1,\dots,c_N$ of the map $\psi$.

Recall that $\ve$ depends on $L$. Let us first assume that there exists a non-increasing positive function $\ve_0(L)$, and point out the existence of a constant $C>0$, such that
\beqn
\text{$C^{-1}L^{-1} \leq \ve\leq \ve_0(L)$ for any sufficiently large $L$}
\quad \text{and}\quad
\lim_{L\to\infty}\ve_0(L) = 0.
\eeqn
For any pair $K_1,K_2\geq 1$ and any $\ve>0$ we define 
\beq\label{eq:A_L}
A^{K_1,K_2}_{L,\epsilon}=\{a\in[0,1)\,:\, B_\epsilon(I_{K_2})\cap\tau_a(I_{K_1})=\emptyset\}.
\eeq
Because of the monotonicity of the set $A^{K_1,K_2}_{L,\epsilon}$ with respect to $\epsilon$,
\beqn
A_L^{K_1,K_2}\defas A^{K_1,K_2}_{L,\ve_0(L)} = \bigcap_{\epsilon\leq \ve_0(L)} A^{K_1,K_2}_{L,\epsilon}.
\eeqn

For large enough $L$ and any $\epsilon\leq\ve_0(L)$, $B_\epsilon(I_{K_2})$ is the union of $N$
disjoint intervals almost centered at the points $c_1,\dots,c_N$, which
do not depend on the parameter $a$. Moreover, $\tau_a(I_{K_1})$ is the union of at most
$N$ intervals in~$\bS$ which can be obtained from
$\tau_0(I_{K_1})$ by a rigid rotation. 
Moreover, using \eqref{eq:I_K}, we obtain
\beqn
\m(B_\epsilon(I_K))\le CKL^{-1}+2\epsilon
\qquad
\text{and}
\qquad
\m(\tau_a(I_K))\le CK^2L^{-1},
\eeqn
if $K/L$ is small enough. These observations together yield
\beq\label{eq:A_L_bound}
\begin{split}
\m(A^{K_1,K_2}_L)&\ge
1-N\m(\tau_a(I_{K_1}))-N\m(B_{\ve_0(L)}(I_{K_2}))
\\
&\ge1-CK_1^2L^{-1}-CK_2L^{-1}-2\ve_0(L).
\end{split}
\eeq
Also note that the complement of $A^{K_1,K_2}_L$ is the union of at most $N^2$ intervals, meaning that the same is true of $A^{K_1,K_2}_L$ itself.

Since the conditions of Theorem~\ref{thm:erg} are assumed, we can use the formula in~\eqref{eq:Lyap} for ergodic measures $\mu$ to bound the Lyapunov exponent $\lambda_a(L)$:
\begin{align*}
 \lambda_a(L)
&\ge \int_{(I_{K_2})^c}\log |\tau_a'|\,d\mu+\int_{I_1}\log
|\tau_a'|\,d\mu\\
&\ge (1-\mu(I_{K_2}))\log
K_2+\sup_{I_1}\rho\cdot\int_{I_1}\log|\tau_a'|\,d\m
\\
&\ge \biggl(1-\sup_{I_{K_2}}\rho\cdot CK_2 L^{-1}\biggr)\log
K_2-\sup_{I_1}\rho\cdot CL^{-1},
\end{align*}
where on the last line Lemma~\ref{lem:log_int} and \eqref{eq:I_K} have been used.

To resume the above estimate, we use the upper bound \eqref{eq:bound2} on the invariant density $\rho$ on $I_{K_2}\supset I_1$. Let $x_0\in I_{K_2}$. 
Note that $\tau_a^{-1}B_\ve(x_0)$ consists of finitely many disjoint
intervals, and thus so does its complementary set. We label all these intervals
of $\bS$ clockwise by $J_1,\cdots, J_{2k},J_{2k+1}=J_1$ so that, for any $i\in\{1,\dots,k\}$,
$$J_{2i-1}\subset \tau_a^{-1}B_\ve(x_0) \quad\text{while}\quad J_{2i}\subset
\left(\tau_a^{-1}B_\ve(x_0)\right)^c. 
$$
For $a\in A^{K_1,K_2}_L$, we have $$J_{2i-1}\cap
I_{K_1}=\emptyset,\quad \ie,\quad \big|\tau_a'|_{J_{2i-1}}\big|\ge K_1,$$
such that
$$\m(J_{2i-1})\le 2\ve/K_1.
$$
For any point $z\in\bS$,
the interval $B_\ve(z)$ can overlap with no more than
$$
M=2\ve\biggl(\min\Bigl\{\m(J_{i})+\m(J_{i+1}) :    1\leq i\leq 2k\; \& \; \tau_a'(x)\neq 0\, \forall x\in J_i\cup J_{i+1} \Bigr\}\biggr)^{-1}+1+N
$$
of the intervals $J_{2i-1}$. Here $N$ is the number of those intervals $J_{2i}$ which contain a critical point of $\tau_a$. On the other hand, if $\tau_a'\neq 0$ on $J_{2i}$, $\tau_a$ maps the interval $J_{2i}$ onto $(B_\ve(x_0))^c$. In this case,
the bound $|\tau_a'(x)|\le CL$ implies $\m(J_{2i})\ge C^{-1}L^{-1}(1-2\ve)$. As $C^{-1}L^{-1}\leq \ve\leq \frac 13$ holds for large $L$, 
$$M\leq  CL\ve$$ uniformly in $z$ and $L$, for such $L$. Therefore,  \eqref{eq:bound2} shows that
$$
\sup_{I_{K_2}}\rho\le \frac{CL\ve}{4\ve^2} \m(J_{2i-1}) 
\le
C\frac{L}{K_1},
$$
which in combination with the earlier bound on $\lambda_a(L)$ results in
\beq\label{eq:Lyap_bound}
\begin{split}
 \lambda_a(L)
& \ge \biggl(1-C\frac{K_2}{K_1}  \biggr)\log
K_2-C\frac1{K_1}
\\
& \ge \biggl(1-C\frac{K_2}{K_1}  \biggr)\log
K_2.
\end{split}
\eeq

Finally, define
\beqn
\ve_0(L)=L^{-1/2},\quad K_1 = (L/\log L)^{1/2}, \quad\text{and}\quad K_2=L^{1/2}/\log L.
\eeqn
Then the parameter set
\beqn
A_L = A_L^{K_1,K_2}
\eeqn
has all desired properties, as can be checked using \eqref{eq:A_L_bound} and \eqref{eq:Lyap_bound}, so that the theorem has been verified in the special case in which $\ve\leq \ve_0(L)$ holds for all large $L$. Now, assume that $\ve>\ve_0(L)$ for an unbounded set of values of $L$ and observe that
\beqn
\begin{split}
& \liminf_{L\to\infty} \inf_{a\in A_L}\frac{\lambda_a(L)}{\log L}
= \min \! \left(\liminf_{\substack{L\to\infty \\ L\,:\,\ve\le \ve_0(L)}} \inf_{a\in A_L}\frac{\lambda_a(L)}{\log L}, \,\liminf_{\substack{L\to\infty \\ L\,:\,\ve> \ve_0(L)}}\inf_{a\in A_L}\frac{\lambda_a(L)}{\log L}\right).
\end{split}
\eeqn
The theorem follows by combining the previous special case with Theorem~\ref{thm:large_smear}.
\end{proof}


\begin{bibdiv}
\begin{biblist} 


\bib{Baxendale1992}{article}{
  author={Baxendale, Peter H.},
  title={Stability and equilibrium properties of stochastic flows of diffeomorphisms},
  conference={ title={}, address={Charlotte, NC}, date={1990}, },
  book={ series={Progr. Probab.}, volume={27}, publisher={Birkh\"auser Boston}, place={Boston, MA}, },
  date={1992},
  pages={3--35},
  review={\MR {1187984 (93h:58167)}},
}

\bib{BenedicksCarleson1985}{article}{
  author={Benedicks, Michael},
  author={Carleson, Lennart},
  title={On iterations of $1-ax^2$ on $(-1,1)$},
  journal={Ann. of Math. (2)},
  volume={122},
  date={1985},
  number={1},
  pages={1--25},
  issn={0003-486X},
  review={\MR {799250 (87c:58058)}},
  doi={10.2307/1971367},
}

\bib{BenedicksViana2006}{article}{
  author={Benedicks, Michael},
  author={Viana, Marcelo},
  title={Random perturbations and statistical properties of H\'enon-like maps},
  journal={Ann. Inst. H. Poincar\'e Anal. Non Lin\'eaire},
  volume={23},
  date={2006},
  number={5},
  pages={713--752},
  issn={0294-1449},
  review={\MR {2259614 (2007f:37042)}},
  doi={10.1016/j.anihpc.2004.10.013},
}

\bib{BenedicksYoung1992}{article}{
  author={Benedicks, Michael},
  author={Young, Lai-Sang},
  title={Absolutely continuous invariant measures and random perturbations for certain one-dimensional maps},
  journal={Ergodic Theory Dynam. Systems},
  volume={12},
  date={1992},
  number={1},
  pages={13--37},
  issn={0143-3857},
  review={\MR {1162396 (93d:58087)}},
  doi={10.1017/S0143385700006556},
}

\bib{CowiesonYoung2005}{article}{
  author={Cowieson, William},
  author={Young, Lai-Sang},
  title={SRB measures as zero-noise limits},
  journal={Ergodic Theory Dynam. Systems},
  volume={25},
  date={2005},
  number={4},
  pages={1115--1138},
  issn={0143-3857},
  review={\MR {2158399 (2006f:37030)}},
  doi={10.1017/S0143385704000604},
}

\bib{GraczykSwiatek1997}{article}{
  author={Graczyk, Jacek},
  author={{\'S}wiatek, Grzegorz},
  title={Generic hyperbolicity in the logistic family},
  journal={Ann. of Math. (2)},
  volume={146},
  date={1997},
  number={1},
  pages={1--52},
  issn={0003-486X},
  review={\MR {1469316 (99b:58079)}},
  doi={10.2307/2951831},
}

\bib{Jakobson1981}{article}{
  author={Jakobson, Michael V.},
  title={Absolutely continuous invariant measures for one-parameter families of one-dimensional maps},
  journal={Comm. Math. Phys.},
  volume={81},
  date={1981},
  number={1},
  pages={39--88},
  issn={0010-3616},
  review={\MR {630331 (83j:58070)}},
}

\bib{KatokKifer1986}{article}{
  author={Katok, Anatole},
  author={Kifer, Yuri},
  title={Random perturbations of transformations of an interval},
  journal={J. Analyse Math.},
  volume={47},
  date={1986},
  pages={193--237},
  issn={0021-7670},
  review={\MR {874051 (88c:58053)}},
}

\bib{Kifer1986}{book}{
  author={Kifer, Yuri},
  title={Ergodic theory of random transformations},
  series={Progress in Probability and Statistics},
  volume={10},
  publisher={Birkh\"auser Boston Inc.},
  place={Boston, MA},
  date={1986},
  pages={x+210},
  isbn={0-8176-3319-7},
  review={\MR {884892 (89c:58069)}},
}

\bib{Kifer1988}{book}{
  author={Kifer, Yuri},
  title={Random perturbations of dynamical systems},
  series={Progress in Probability and Statistics},
  volume={16},
  publisher={Birkh\"auser Boston Inc.},
  place={Boston, MA},
  date={1988},
  pages={vi+294},
  isbn={0-8176-3384-7},
  review={\MR {1015933 (91e:58159)}},
}

\bib{Kozlovski_Shen_vanStrien_2007}{article}{
  author={Kozlovski, Oleg},
  author={Shen, Weixiao},
  author={van Strien, Sebastian},
  title={Density of hyperbolicity in dimension one},
  journal={Ann. of Math. (2)},
  volume={166},
  date={2007},
  number={1},
  pages={145--182},
  issn={0003-486X},
  review={\MR {2342693 (2008j:37081)}},
  doi={10.4007/annals.2007.166.145},
}

\bib{LedrappierYoung1988}{article}{
  author={Ledrappier, Fran{\c {c}}ois},
  author={Young, Lai-Sang},
  title={Entropy formula for random transformations},
  journal={Probab. Theory Related Fields},
  volume={80},
  date={1988},
  number={2},
  pages={217--240},
  issn={0178-8051},
  review={\MR {968818 (90d:58079)}},
  doi={10.1007/BF00356103},
}

\bib{LeJan1985}{article}{
  author={Le Jan, Yves},
  title={On isotropic Brownian motions},
  journal={Z. Wahrsch. Verw. Gebiete},
  volume={70},
  date={1985},
  number={4},
  pages={609--620},
  issn={0044-3719},
  review={\MR {807340 (87a:60090)}},
  doi={10.1007/BF00531870},
}

\bib{LinYoung2010}{article}{
  author={Lin, Kevin},
  author={Young, Lai-Sang},
  title={Dynamics of periodically kicked oscillators},
  journal={J. Fixed Point Theory Appl.},
  volume={7},
  date={2010},
  number={2},
  pages={291--312},
  issn={0003-486X},
  doi={10.1007/s11784-010-0025-9},
}

\bib{Lyubich1998}{article}{
  author={Lyubich, Mikhail},
  title={Regular and stochastic dynamics in the real quadratic family},
  journal={Proc. Natl. Acad. Sci. USA},
  volume={95},
  date={1998},
  number={24},
  pages={14025--14027 (electronic)},
  issn={1091-6490},
  review={\MR {1661281 (2000c:37060)}},
  doi={10.1073/pnas.95.24.14025},
}

\bib{Lyubich1997}{article}{
  author={Lyubich, Mikhail},
  title={Dynamics of quadratic polynomials. I, II},
  journal={Acta Math.},
  volume={178},
  date={1997},
  number={2},
  pages={185--247, 247--297},
  issn={0001-5962},
  review={\MR {1459261 (98e:58145)}},
  doi={10.1007/BF02392694},
}

\bib{Lyubich2002}{article}{
  author={Lyubich, Mikhail},
  title={Almost every real quadratic map is either regular or stochastic},
  journal={Ann. of Math. (2)},
  volume={156},
  date={2002},
  number={1},
  pages={1--78},
  issn={0003-486X},
  review={\MR {1935840 (2003i:37032)}},
  doi={10.2307/3597183},
}

\bib{OttStenlund2010}{article}{
  author={Ott, William},
  author={Stenlund, Mikko},
  title={From limit cycles to strange attractors},
  journal={Comm. Math. Phys.},
  volume={296},
  date={2010},
  number={1},
  pages={215--249},
  issn={0010-3616},
  review={\MR {2606633}},
  doi={10.1007/s00220-010-0994-y},
}

\bib{PontryaginAndronovVitt1933}{article}{
  author={Pontryagin, Lev Semenovich},
  author={Andronov, Aleksandr Aleksandrovich},
  author={Vitt, Aleksand Adolfovich},
  title={On statistical cosideration of dynamical systems},
  journal={J. Experiment. Theor. Phys.},
  volume={3},
  date={1933},
  number={3},
  pages={165--180},
  note={In Russian},
}

\bib{Rychlik1988}{article}{
  author={Rychlik, Marek Ryszard},
  title={Another proof of Jakobson's theorem and related results},
  journal={Ergodic Theory Dynam. Systems},
  volume={8},
  date={1988},
  number={1},
  pages={93--109},
  issn={0143-3857},
  review={\MR {939063 (90b:58149)}},
  doi={10.1017/S014338570000434X},
}

\bib{Sinai1989}{article}{
  author={Sinai, Yakov Grigorevich},
  title={Kolmogorov's work on ergodic theory},
  journal={Ann. Probab.},
  volume={17},
  date={1989},
  number={3},
  pages={833--839},
  issn={0091-1798},
  review={\MR {1009437 (91e:01017)}},
}

\bib{Tsujii1993}{article}{
  author={Tsujii, Masato},
  title={Positive Lyapunov exponents in families of one-dimensional dynamical systems},
  journal={Invent. Math.},
  volume={111},
  date={1993},
  number={1},
  pages={113--137},
  issn={0020-9910},
  review={\MR {1193600 (93j:58081)}},
  doi={10.1007/BF01231282},
}

\bib{WangYoung2002}{article}{
  author={Wang, Qiudong},
  author={Young, Lai-Sang},
  title={From invariant curves to strange attractors},
  journal={Comm. Math. Phys.},
  volume={225},
  date={2002},
  number={2},
  pages={275--304},
  issn={0010-3616},
  review={\MR {1889226 (2003e:37045)}},
  doi={10.1007/s002200100582},
}

\bib{WangYoung2003}{article}{
  author={Wang, Qiudong},
  author={Young, Lai-Sang},
  title={Strange attractors in periodically-kicked limit cycles and Hopf bifurcations},
  journal={Comm. Math. Phys.},
  volume={240},
  date={2003},
  number={3},
  pages={509--529},
  issn={0010-3616},
  review={\MR {2005855 (2004i:37070)}},
}

\bib{WangYoung2006}{article}{
  author={Wang, Qiudong},
  author={Young, Lai-Sang},
  title={Nonuniformly expanding 1D maps},
  journal={Comm. Math. Phys.},
  volume={264},
  date={2006},
  number={1},
  pages={255--282},
  issn={0010-3616},
  review={\MR {2212223 (2006k:37103)}},
  doi={10.1007/s00220-005-1485-4},
}

\bib{Young1986}{article}{
  author={Young, Lai-Sang},
  title={Stochastic stability of hyperbolic attractors},
  journal={Ergodic Theory Dynam. Systems},
  volume={6},
  date={1986},
  number={2},
  pages={311--319},
  issn={0143-3857},
  review={\MR {857204 (88a:58160)}},
  doi={10.1017/S0143385700003473},
}

\bib{Young2008}{article}{
  author={Young, Lai-Sang},
  title={Chaotic phenomena in three settings: large, noisy and out of equilibrium},
  journal={Nonlinearity},
  volume={21},
  date={2008},
  number={11},
  pages={T245--T252},
  issn={0951-7715},
  review={\MR {2448225 (2009m:37001)}},
  doi={10.1088/0951-7715/21/11/T04},
}

\end{biblist} 
\end{bibdiv}



\end{document}